\newtheorem{theorem}{Theorem}[section]
\newtheorem{thm}{Theorem}
\newtheorem{prop}[theorem]{Proposition}
\theoremstyle{remark}
\numberwithin{equation}{section}
\begin{document}

\title[$M$-groups and Codegrees]
{$M$-groups and Codegrees; $M_{p}$-groups and Brauer Character Degrees}

\author[X. Chen]{Xiaoyou Chen}
\address{School of Mathematics and Statistics, Henan University of Technology, Zhengzhou 450001, China}
\email{cxymathematics@hotmail.com}

\author[M. L. Lewis]{Mark L. Lewis}
\address{Department of Mathematical Sciences, Kent State University, Kent, OH 44242, USA}
\email{lewis@math.kent.edu}

\subjclass[2010]{Primary 20C15; Secondary 20C20}

\date{\today}
%\thanks{}

\keywords{Brauer character; Codegree; $M$-group; $M_{p}$-group}

\begin{abstract}
Let $G$ be a finite group and $p$ be a prime.  We prove that if $G$ has three codegrees, then $G$ is an $M$-group.  We prove for some prime $p$ that if every irreducible Brauer character of $G$ is a prime, then for every normal subgroup $N$ of $G$ either $G/N$ or $N$ is an $M_p$-group.
%Let ${\rm Irr}(G)$ and ${\rm IBr}(G)$ be the sets of
%irreducible (complex) characters and irreducible $(p-)$Brauer characters of $G$,
%respectively.
%We say that $G$ is an $M_{p}$-group if every irreducible Brauer character
%is induced from a linear Brauer character of some subgroup of $G$.
%We find in this note some phenomena %about
%between $M_{p}$-groups and
%character codegrees as well as Brauer character degrees.
\end{abstract}

\maketitle
%%%%%%%%%%%%%%%%%%%%%%%%%%%%%%%%%%%%%%%%%%%%%%%%%%%%%%%%%%%%%%%%%%%%%%%%%%%%%%%%%%%%%%%%%%%%%%%%%%%%%%

\section{Introduction}

All groups in this note are finite, and refer to \cite{Isaacs1976} and \cite{Navarro1998} for notation.  Let $G$ be a group and ${\rm Irr}(G)$ be the set of irreducible (complex) characters of $G$. Recall that a character $\chi$ of $G$ is {\it monomial} when $\chi$ is induced from a linear character of some subgroup of $G$.  If every character $\chi\in {\rm Irr}(G)$ is monomial, then $G$ is said to be an {\it $M$-group}.  A well-known theorem of Taketa states that an $M$-group is solvable (see \cite[Corollary 5.13]{Isaacs1976}).

Let $\chi\in {\rm Irr}(G)$ and write
$${\rm cod}(\chi)=\frac{|G: \ker\chi|}{\chi(1)}.$$
And write ${\rm cod}(G)=\{{\rm cod}(\chi)\mid \chi\in {\rm Irr}(G)\}$.

Qian, Wang and Wei define ${\rm cod}(\chi)$ to be the {\it codegree} of the irreducible character $\chi$ of $G$ in \cite{Qian1}, although the name codegree of a character was first used by Chillag, Mann, and Manz in \cite{ChillagMann} with a slightly different definition.  The properties of codegrees have gained some interest in recent years.  For example, the codegrees of $p$-groups have been studied in ~\cite{DuLewis, Moreto2022%, Moreto2023
};
and the codegree analogue of Huppert's $\rho$-$\sigma$ conjecture has been studied in ~
\cite{%Moreto2021,
YangQian}.

Alizadeh, et al. in \cite{ABGGH} consider finite groups with few codegrees.  In particular, they show that if $|{\rm cod}(G)| = 2$, then $G$ is elementary abelian, and they prove that if $|{\rm cod} (G)| = 3$, then $G$ is solvable.  When a finite group $G$ is such that $|{\rm cod}(G)|\leq 3$, we show that $G$ must be an $M$-group.%we find that
%the following which is a little surprising is first time for connecting $M_{p}$-groups with the set of codegrees
%a little interesting about $G$
%although it can be concluded quickly from known results.

\begin{thm}\label{intro-theorem1}
If $G$ is a group with $|{\rm cod} (G)| \leq 3$, then $G$ is %a relative $M$-group with respect to every normal subgroup.  In particular, $G$ is
an $M$-group. %and an $M_{p}$-group for every prime $p$.
\end{thm}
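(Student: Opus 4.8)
When $|{\rm cod}(G)|\le 2$, the result of Alizadeh et al.~\cite{ABGGH} gives that $G$ is elementary abelian, in particular abelian, in particular an $M$-group; so we may assume $|{\rm cod}(G)|=3$ and invoke \cite{ABGGH} once more to get that $G$ is solvable. Write ${\rm cod}(G)=\{1,a,b\}$ with $1<a<b$, and assume $G'\ne 1$ (otherwise $G$ is abelian and there is nothing to prove). I will use throughout that for $N\trianglelefteq G$ one has ${\rm cod}(G/N)\subseteq{\rm cod}(G)$, and more precisely ${\rm cod}_G(\chi)={\rm cod}_{G/\ker\chi}(\bar\chi)$ for every $\chi\in{\rm Irr}(G)$; hence every quotient of $G$ again has at most three codegrees, and induction on $|G|$ is available.

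The first step is to control $G/G'$. A finite abelian group surjects onto a cyclic group of order equal to its exponent, and every divisor of the order of a cyclic quotient of $G$ is itself a codegree; hence $e:=\exp(G/G')$ has at most three divisors, so $e\in\{1,q,q^2\}$ for a prime $q$. A solvable perfect group is trivial, so $e\ne 1$; as $e$ is then a prime power, the finite abelian group $G/G'$ is a $q$-group. If $e=q^2$ then $q,q^2\in{\rm cod}(G)$, forcing ${\rm cod}(G)=\{1,q,q^2\}$, and since the prime divisors of $|G|$ coincide with those of the codegrees (a known property), $G$ is a $q$-group, hence nilpotent, hence an $M$-group. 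So we may assume $e=q$: then $G/G'$ is elementary abelian of exponent $q$ and $q\in{\rm cod}(G)$. If $G$ is itself a $q$-group we are again done; so assume a prime $r\ne q$ divides $|G|$, whence $r\mid |G'|$ since $G=G'Q$ for $Q\in{\rm Syl}_q(G)$.

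From here I would aim to prove that $G$ is metabelian and then invoke the classical fact that metabelian groups are $M$-groups (short proof: induct on $|G|$; given $\chi\in{\rm Irr}(G)$ pick $\theta\in{\rm Irr}(G')$ under $\chi$; if $I_G(\theta)<G$ then $\chi$ is induced from the, by induction monomial, Clifford correspondent in the metabelian group $I_G(\theta)$; if $\theta$ is $G$-invariant then $\ker\theta=\ker\chi\cap G'\trianglelefteq G$, so pass to $G/\ker\theta$ unless $\ker\theta=1$, in which case $G'$ is cyclic and $G$-invariance of $\theta$ forces $G'\le Z(G)$, so $G$ is nilpotent). To obtain metabelianity I would take a minimal-order counterexample $G$ to the theorem; then every proper quotient of $G$ is an $M$-group, so $G$ has a faithful non-monomial $\chi\in{\rm Irr}(G)$, whence $Z(G)$ is cyclic and ${\rm cod}(\chi)=|G|/\chi(1)\in\{a,b\}$. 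Writing $\chi_{G'}=e(\theta_1+\cdots+\theta_t)$ by Clifford's theorem, with $t=|G:I_G(\theta_1)|$ a power of $q$ (since $G/G'$ is a $q$-group), I would split into the case $t>1$, where $\chi$ is induced from the proper subgroup $I_G(\theta_1)$ and one argues carefully — codegrees do not pass to subgroups — using $\chi(1)^2\le|G|={\rm cod}(\chi)\,\chi(1)$, and the case $t=1$, where $\theta:=\theta_1$ is a faithful $G$-invariant character of $G'$ (so $Z(G')$ is cyclic), treated via the sub-cases $e=1$ (Gallagher: ${\rm Irr}(G\mid\theta)=\{\chi\beta:\beta\in{\rm Irr}(G/G')\}$, all linear twists of $\chi$) and $e>1$ ($\theta$ fully ramified over an invariant character, so $e$ is a $q$-power and the structure is tightly constrained). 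In each branch the presence of only two nontrivial codegrees should force either $G'$ abelian or $\chi$ monomial; concretely, it would suffice to show that a minimal normal subgroup of $G$ contained in $G'$ must equal $G'$, for then $G=G'\rtimes Q$ with both factors elementary abelian, hence $G$ is metabelian.

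The main obstacle is precisely this last structural step: turning ``$|{\rm cod}(G)|=3$'' into enough rigidity to force $G''=1$, i.e. to exclude the solvable non-$M$-configurations — the smallest being $Q_8\rtimes C_3\cong{\rm SL}(2,3)$, for which one must verify that there are at least four codegrees. I expect this to come down to a delicate Clifford-theoretic bookkeeping for the faithful character $\chi$, reconciling $\chi(1)^2\le|G|$, the cyclicity of $Z(G)$ and $Z(G')$, the $q$-power ramification imposed by $G/G'$ being an elementary abelian $q$-group, and the scarcity of available codegrees, probably organised by whether or not the minimal normal subgroups of $G$ are central.
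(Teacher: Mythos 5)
Your argument is incomplete: the heart of the matter --- showing that a group with exactly three codegrees cannot be a solvable non-$M$-group --- is never actually proved. Everything up to the reduction ($|{\rm cod}(G)|\le 2$ gives abelian; $\exp(G/G')\in\{q,q^2\}$; the $q$-group cases are nilpotent and hence monomial) is correct, but from ``From here I would aim to prove that $G$ is metabelian'' onward you only describe a plan, and you yourself identify the unresolved obstacle: excluding configurations such as ${\rm SL}(2,3)$ by ``delicate Clifford-theoretic bookkeeping'' that is not carried out. As written, the case $t=1$ with $e>1$ (a fully ramified, $G$-invariant, faithful $\theta\in{\rm Irr}(G')$) is exactly where a non-monomial character could live, and no contradiction with $|{\rm cod}(G)|=3$ is derived there. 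So the proposal does not constitute a proof.

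The gap is easy to close, because the reference you already cite for solvability contains much more: Theorem 3.4 of Alizadeh et al.\ classifies the groups with $|{\rm cod}(G)|=3$ --- either $G$ is a $p$-group of nilpotence class $2$, or $G$ is a Frobenius group with complement of prime order and (by their Theorem 3.5) abelian Frobenius kernel $G'$. In the first case $G$ is nilpotent, and in the second $G$ is metabelian, so both are $M$-groups; this is precisely the paper's proof. If you want a self-contained argument instead, you must genuinely establish metabelianity (or monomiality directly) in your $t=1$ branch; in particular you would need to show that a nonabelian $G'$ forces a fourth codegree, which is the one computation your sketch defers. Until that step is supplied, the proof stands on an unproven structural claim.
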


We observe that the bound on the hypothesis $|{\rm cod}(G)| \leq 3$ in Theorem \ref{theorem1} cannot be sharpened.  For example, take $G = {\rm SL}_{2}(3)$.  In this case, we have ${\rm cod}(G)=\{1, 3, 4, 12\}$;
however, ${\rm SL}(2, 3)$ is not an $M$-group. %$M_{3}$-group.

%we know that ${\rm SL}(2, 3)$ is not an $M$-group,
%and ${\rm SL}_{2}(3)$ is not a relative $M$-group with respect to the center of
%${\rm SL}(2, 3)$, either, %which is isomorphic to the cyclic group of order $2$, either,
%although ${\rm SL}_{2}(3)$ is a relative $M$-group with respect to $Q_{8}$.

Let $p$ be a prime, and denote by ${\rm IBr}(G)$ the set of irreducible ($p$-)Brauer characters of $G$.  A ($p$-)Brauer character of $G$ is {\it monomial} if it is induced from a linear ($p$-)Brauer character of some subgroup (not necessarily proper) of $G$.  This definition was introduced by Okuyama in \cite{Okuyama} using module theory.

A group $G$ is called an {\it $M_{p}$-group} if every Brauer character $\varphi\in {\rm IBr}(G)$ is monomial.  Okuyama proves in \cite{Okuyama} that $M_{p}$-groups must be solvable.  It is known using the Fong-Swan theorem that an $M$-group is necessarily an $M_{p}$-group for every prime $p$.  However, an $M_{p}$-group need not be an $M$-group; for example, ${\rm SL}(2, 3)$ is an $M_{2}$-group, not an $M$-group.  In fact, even when $G$ is an $M_p$-group for every prime $p$, it is not necessarily the case that $G$ is an $M$-group.  We next consider the relationship between $M_{p}$-groups and Brauer character degrees.

\begin{thm}\label{theorem2}
Let $G$ be a group and $N$ be a normal subgroup of $G$.
%And let ${\rm bcd}(G)=\{1, q, r\}$,
%where $q, r$ are different primes.
If every nonlinear irreducible $(p-)$Brauer character of $G$ has prime degree, then either $N$ is an $M_{p}$-group or $G/N$ is an $M_{p}$-group.
\end{thm}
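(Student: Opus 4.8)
The plan is to derive the dichotomy from a single implication: assuming that $N$ is \emph{not} an $M_p$-group, I would show that every irreducible Brauer character of $G/N$ is linear, whence $G/N$ is trivially an $M_p$-group. This is the Brauer-character analogue of the classical Gallagher-type argument for ordinary characters, and it should require neither a minimal-counterexample setup nor a separate solvability reduction.

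First, using the failure of the $M_p$-property for $N$, I would fix a non-monomial $\theta \in \IBr(N)$; since linear Brauer characters are monomial, $\theta(1) \ge 2$. Next I would choose any $\varphi \in \IBr(G)$ lying over $\theta$ — for instance an irreducible constituent of the induced Brauer character $\theta^{G}$. Then $\varphi(1) \ge \theta(1) \ge 2$, so $\varphi$ is nonlinear and, by hypothesis, $\varphi(1)$ is a prime. Clifford's theorem for Brauer characters gives $\varphi_N = e(\theta_1 + \dots + \theta_t)$, where the $\theta_i$ are the $G$-conjugates of $\theta$ and $\varphi(1) = e\,t\,\theta(1)$. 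Because $\theta(1)$ divides the prime $\varphi(1)$ and $\theta(1) > 1$, I get $\varphi(1) = \theta(1)$ and therefore $e = t = 1$; that is, $\theta$ is $G$-invariant and $\varphi_N = \theta$, so $\varphi$ is an extension of $\theta$ to $G$.

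Now I would apply Gallagher's theorem for Brauer characters: since $\theta$ extends to $\varphi \in \IBr(G)$, the map $\beta \mapsto \beta\varphi$ is a bijection from $\IBr(G/N)$ onto the set of irreducible Brauer characters of $G$ lying over $\theta$. In particular, for every $\beta \in \IBr(G/N)$ the product $\beta\varphi$ is an irreducible Brauer character of $G$ of degree $\beta(1)\varphi(1) \ge \varphi(1) \ge 2$; being nonlinear, its degree must be prime, which forces $\beta(1) = 1$. Hence $G/N$ has only linear irreducible Brauer characters, all of which are monomial, so $G/N$ is an $M_p$-group, completing the proof.

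The only step calling for genuine care is verifying that the modular versions of Clifford's and Gallagher's theorems apply verbatim in this Brauer-character setting — in particular that the modular Gallagher correspondence is exactly the map $\beta \mapsto \beta\varphi$ and that it exhausts $\IBr(G \mid \theta)$; these are standard and can be quoted from Navarro's book. I would note that no appeal to the classification of finite simple groups or to Okuyama's solvability theorem is needed: the solvability of $G/N$ is automatic from the conclusion that all of its irreducible Brauer characters are linear.
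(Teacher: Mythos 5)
Your proof is correct and rests on exactly the same mechanism as the paper's: Clifford's theorem forces a nonlinear $\theta\in\IBr(N)$ to extend to some $\varphi\in\IBr(G)$ of prime degree $q$, and the Gallagher-type product $\beta\varphi$ (Navarro, Corollary 8.20) then forces every $\beta\in\IBr(G/N)$ to be linear. Your organization is a bit more streamlined: the paper routes the dichotomy through the auxiliary claim that $G'N\subseteq PN$ or $N'\subseteq P$, invoking a lemma of Wang--Chen--Zeng twice to pass between ``all irreducible Brauer characters are linear'' and a Sylow containment, whereas you observe directly that a group all of whose irreducible Brauer characters are linear is automatically an $M_{p}$-group.
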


Although Tong-Viet \cite{Tong} has given the classification of groups all of whose nonlinear irreducible Brauer characters have prime degrees, the above result is not found in the literature.

\section{Proofs}

%We first present some lemmas which will be used in the proof of Theorem \ref{theorem1}.

%\begin{lem}\label{lemma1}
%Let $G$ be a group.
%Then $|{\rm cod}(G)|=2$ if and only if $G$ is an elementary abelian group.
%\end{lem}

%\begin{proof}
%See \cite[Lemma 3.1]{ABGGH}.
%\end{proof}

%\begin{lem}\label{lemma2}
%Let $G$ be a Frobenius group with $|{\rm cod}(G)|=3$.
%Then the derived subgroup $G'$ is an elementary abelian group.
%\end{lem}

%\begin{proof}
%See\cite[Theorem 3.5]{ABGGH}.
%\end{proof}

%\begin{lem}\label{lemma3}
%If $G$ is a metabelian group (i.e. the derived subgroup $G'$ is abelian),
%then $G$ is a relative $M$-group with respect to evrey normal subgroup of $G$.
%\end{lem}

%\begin{proof}
%See \cite[Theorem 1]{ChenKW}.
%\end{proof}

%\begin{lemma}

%\end{lemma}

We first give a proof of Theorem \ref{intro-theorem1} which we restate here.

\begin{theorem}\label{theorem1}
If $G$ is a group with $|{\rm cod} (G)| \leq 3$, then $G$ is an $M$-group and an $M_{p}$-group for every prime $p$.
\end{theorem}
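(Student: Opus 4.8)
The plan is to induct on $|G|$ and reduce to a minimal situation. First I would dispose of the cases $|\mathrm{cod}(G)| \le 2$ using the result of Alizadeh et al.\ quoted in the introduction: if $|\mathrm{cod}(G)| = 1$ then $G$ is trivial, and if $|\mathrm{cod}(G)| = 2$ then $G$ is elementary abelian, and in both cases $G$ is visibly an $M$-group and an $M_p$-group. So I may assume $|\mathrm{cod}(G)| = 3$, say $\mathrm{cod}(G) = \{1, a, b\}$ with $1 < a < b$, and by the other cited result $G$ is solvable. Since $G$ is solvable, by Taketa-type arguments it suffices to show every $\chi \in \mathrm{Irr}(G)$ is monomial, and once $G$ is an $M$-group the Fong--Swan theorem gives that $G$ is an $M_p$-group for every prime $p$. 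So the whole content is the $M$-group claim.

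Next I would exploit the structure forced by having only three codegrees. A key observation is that codegrees behave well with respect to quotients: if $N \trianglelefteq G$ then $\mathrm{cod}(G/N) \subseteq \mathrm{cod}(G)$, because a character of $G/N$ is a character of $G$ with $N$ in its kernel and the codegree formula is unchanged. Hence every proper quotient $G/N$ with $N \ne 1$ has at most three codegrees, so by induction every proper quotient of $G$ is an $M$-group. Now I would invoke the standard fact (essentially the proof of Taketa's theorem, cf.\ \cite[Chapter 6]{Isaacs1976}) that if every proper quotient of $G$ is an $M$-group, then every $\chi \in \mathrm{Irr}(G)$ that is not faithful is monomial; so the only possible obstruction is a faithful nonmonomial $\chi \in \mathrm{Irr}(G)$. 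Moreover I would want to pin down that such a $G$ has a unique minimal normal subgroup (otherwise $G$ embeds in a product of proper quotients and one shows faithful characters split up), and that $G$ is not an $M$-group forces the existence of exactly one faithful $\chi$ which must be the unique irreducible character of its (necessarily the largest) degree, whose codegree is then forced to be $b = |G|/\chi(1)$.

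With $G$ reduced to a solvable group with a unique minimal normal subgroup $N$ and essentially one ``bad'' faithful character $\chi$, I would analyze $N$ and $C_G(N)$: since $G$ is solvable, $N$ is an elementary abelian $q$-group for some prime $q$, and $F = F(G)$ with $C_G(F) \le F$. The linear characters of $G$ all have codegree of the form $|G:G'|$-type divisors, and since there are only two nontrivial codegrees $a$ and $b$, both the abelian section $G/G'$ and the structure of the nonlinear characters are severely constrained — in particular every nonlinear irreducible character has codegree $a$ or $b$, and the faithful one has codegree $b$. I would then run through the short list of solvable groups this allows, using Clifford theory over $N$: restricting $\chi$ to $N$ gives $\chi_N = e \sum \lambda_i$ with the $\lambda_i$ a $G$-orbit of linear characters, and the inertia group $T = I_G(\lambda_1)$ together with character correspondence reduces monomiality of $\chi$ to monomiality of a character of $T$; if $T < G$ we win by induction, so $T = G$ and $N \le Z(\chi) = Z(G)$ is cyclic of prime order, making $\chi$ a character of $G$ that is ``nearly faithful'' with small center, and I would derive a numerical contradiction from the codegree count (the degrees $\chi(1)^2 \le |G:Z(G)|$ together with the divisibility constraints coming from only three codegrees leave no room).

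The main obstacle I anticipate is this last step: squeezing out the faithful nonmonomial character. Merely knowing $|\mathrm{cod}(G)| = 3$ is a weak hypothesis and the reduction above does not immediately produce a contradiction; one really has to combine (i) the quotient-monotonicity of codegrees, (ii) the uniqueness of the minimal normal subgroup, (iii) Clifford theory to force the inertia group to be all of $G$, and (iv) a delicate arithmetic analysis of which degree sets are compatible with exactly two nontrivial codegrees. It is conceivable that one needs the finer structure theorems for groups with few character degrees (Isaacs--Passman type results) or the explicit description from \cite{ABGGH} of the groups with $|\mathrm{cod}(G)| = 3$; if such a classification is available, the cleanest route is simply to check the monomiality of each group on that list, and I would fall back on that if the uniform argument stalls.
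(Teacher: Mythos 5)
Your treatment of the cases $|{\rm cod}(G)|\le 2$ and the final passage from ``$M$-group'' to ``$M_p$-group for every $p$'' via Fong--Swan match the paper exactly. The problem is the case $|{\rm cod}(G)|=3$, where your primary route --- a minimal-counterexample induction --- has a genuine gap that you yourself flag but never close. The reduction steps are mostly sound (codegrees of quotients are codegrees of $G$, so in a minimal counterexample only a faithful character can fail to be monomial, and Clifford theory over a minimal normal subgroup $N$ forces the inertia group to be all of $G$ and $N\le {\bf Z}(G)$ cyclic), but at that point everything rests on the promised ``numerical contradiction from the codegree count,'' which is never exhibited. This is not a routine omission: ${\rm SL}(2,3)$ is exactly a solvable group with a unique minimal normal subgroup $N={\bf Z}(G)$ of prime order and a faithful nonmonomial character of degree $2$ whose inertia situation is the one you describe, and it has $|{\rm cod}(G)|=4$; so any contradiction must use the hypothesis of only two nontrivial codegrees in an essential, quantitative way, and nothing in your sketch does so. (Your intermediate claim that a minimal counterexample has a unique minimal normal subgroup because ``faithful characters split up'' over a product of proper quotients is also not justified as stated; a group with two distinct minimal normal subgroups can still have a faithful irreducible character.)

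The fallback you mention in your last sentence is in fact the paper's entire proof, and it is available: by Theorems 3.4 and 3.5 of \cite{ABGGH}, a group with exactly three codegrees is either a $p$-group of nilpotence class $2$, or a Frobenius group whose kernel is the abelian derived subgroup $G'$ and whose complement has prime order. In the first case $G$ is nilpotent, hence an $M$-group; in the second case all Sylow subgroups of $G$ are abelian (equivalently here, $G$ is metabelian), hence $G$ is an $M$-group. So the theorem follows by quoting the classification and checking two short cases, with no induction needed; a self-contained version of your argument would essentially have to reprove that classification, which is where the real arithmetic with codegrees lives.
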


\begin{proof}%[Proof of Theorem \ref{theorem1}]
When $|{\rm cod} (G)| = 1$, we know $G = 1$ and there is nothing to prove.

When $|{\rm cod}(G)| = 2$, we have by %Lemma \ref{lemma1}
\cite[Lemma 3.1]{ABGGH} $G$ is abelian. It follows %by %Lemma \ref{lemma3}
%\cite[Theorem 1]{ChenKW} that $G$ is a relative $M$-group with respect to every normal subgroup of $G$.
that $G$ is an $M$-group.

Now, assume that $G$ is a group with $|{\rm cod} (G)| = 3$. Then by \cite[Theorem 3.4]{ABGGH} one of the following is true:

\begin{enumerate}
\item [(i)] $G$ is a $p$-group of nilpotence class $2$ with ${\rm cod}(G)=\{1, p, p^{s}\}$ where $s\geq 2$.
\item [(ii)] $G$ is a Frobenius group with a Frobenius complement of prime order $p$, where $|G|$ is divisible by exactly two primes $p$ and $q$ and ${\rm cod}(G)=\{1, p, q^{s}\}$ for some integer $s\geq 1$.
\end{enumerate}

%In light (i), (ii) and %Lemma \ref{lemma2},
Using \cite[Theorem 3.5]{ABGGH}, we know that in (ii) that the Frobenius kernel $G'$ is abelian, and in particular, all Sylow subgroups of $G$ in (ii) are abelian, so $G$ is an $M$-group.
%Then by Lemma \ref{lemma3}
%Applying \cite[Theorem 1]{ChenKW}, we have that $G$ is a relative $M$-group with respect to any normal subgroup of $G$.
%
%In particular, $G$ is a relative $M$-group with respect to $1$ and hence $G$ is an $M$-group.
In (i), we know that $G$ is a $p$-group and so, $G$ is an $M$-group.  Thus, in all cases, $G$ is an $M$-group.

Finally, it follows by the Fong-Swan theorem (see Remark 3.5 (1) of \cite{Okuyama}) %definition of an $M_{p}$-group
that $G$ is an $M_{p}$-group for every prime $p$.
\end{proof}

%Before we give the proof of Theorem \ref{theorem2},
%we list a lemma.

%\begin{lem}\label{lemma4}
%Let $G$ be a group,
%$p$ be a prime and ${\rm IBr}(G)$ be the set of irreducible ($p-$)Brauer characters of $G$.
%If every Brauer character $\varphi\in {\rm IBr}(G)$ is linear,
%then the derived subgroup $G'$ is contained in a Sylow $p$-subgroup of $G$.
%\end{lem}

%\begin{proof}
%See \cite[Lemma 2.1]{WCZ}.
%\end{proof}

Now, we give the proof of Theorem \ref{theorem2}.

\begin{proof}[Proof of Theorem \ref{theorem2}]
We claim that either $G'N\subseteq PN$ or $N'\subseteq P$, where $G'$ is the derived subgroup of $G$ and $P$ is a Sylow $p$-subgroup of $G$.  Suppose that $G'N \nsubseteq PN$ and we want to prove that $N'\subseteq P$.

If there exists some Brauer character $\theta\in {\rm IBr}(N)$ with $\theta (1) > 1$, then there will exist a Brauer character $\varphi\in {\rm IBr}(G)$ such that $\theta$ is an irreducible constituent of $\varphi_{N}$. Thus, by Clifford's theorem \cite[Corollary 8.7]{Navarro1998}, we have
$$\varphi_{N} = e \sum_{i=1}^{t} \theta_{i},$$
where $\theta_{1} = \theta$ and the $\theta_{i}$ are all conjugate to $\theta$ in $G$ for positive integers $e$ and $t$.  Since $\varphi (1) = et \theta (1) > 1$, we may assume without loss of generality that $\varphi(1) = q$, where $q$ is a prime,and it follows that $\theta (1) = q$ and $e = t = 1$, so that $\varphi_{N}=\theta$.

Thus, for distinct characters $\beta\in {\rm IBr} (G/N)$, we have that $\varphi\beta$ are irreducible and distinct by \cite[Corollary 8.20]{Navarro1998}.  Since $(G/N)' = G'N/N \nsubseteq PN/N$, we conclude by %Lemma \ref{lemma4}
\cite[Lemma 2.1]{WCZ} that there exists some nonlinear irreducible Brauer character in ${\rm IBr}(G/N)$.  Then we can choose characters $\beta \in {\rm IBr} (G/N)$ with $\beta (1) > 1$.  Thus,
$$(\beta\varphi)(1)=\beta(1)\varphi(1)=\beta(1)\cdot q>q,$$
which is a contradiction to the hypothesis that every nonlinear irreducible Brauer character of $G$ has prime degree.
%${\rm bcd}(G)=\{1, q, r\}$.
Therefore, we conclude that $N$ has no nonlinear irreducible Brauer characters.

Again by %Lemma \ref{lemma4}
\cite[Lemma 2.1]{WCZ}, we deduce that the derived subgroup $N'$ of $N$ is contained in a Sylow $p$-subgroup $S$ of $N$.  Then $S$ is a characteristic subgroup of $N$.  Since $N$ is a normal subgroup of $G$, it follows that $N'$ is contained in $P$, as desired.

If $G'N\subseteq PN$, where $P \in {\rm Syl}_{p} (G)$, then $\displaystyle \frac G{PN} \cong \frac {\displaystyle \frac GN}{\displaystyle \frac {PN}N}$ is abelian, and so, it follows by the Fong-Swan theorem that $\displaystyle \frac G{PN}$ is an $M_{p}$-group.  Therefore, $G/N$ is an $M_{p}$-group since $PN/N$ is a normal Sylow $p$-subgroup of $G/N$.  If $N'\subseteq P$, then $N'=N'\cap N\subseteq P\cap N\in {\rm Syl}_{p}(N)$.  Write $P\cap N=S$. Then $N/S$ is abelian and so $N$ is an $M_{p}$-group.
\end{proof}

Under the hypothesis %${\rm bcd}(G)=\{1, q, r\}$
of Theorem \ref{theorem2}, we do not necessarily have that $G$ is an $M_{p}$-group.  For example, when $G = {\rm SL} (2, 3)$ and $p = 3$, we see that ${\rm bcd}(G)=\{1, 2, 3\}$, where ${\rm bcd} (G) = \{ \varphi(1) \mid \varphi \in {\rm IBr}(G) \}$, and ${\rm bcd}$ stands for {\it ($p-$)Brauer character degree set}, but ${\rm SL} (2, 3)$ is not an $M_{3}$-group since it has an irreducible $3$-Brauer character of degree $2$.

In addition, we do not even necessarily have that $G$ is solvable. For example,  when $G$ is the alternating group $A_{5}$ and $p=5$, we see that ${\rm bcd} (G) = \{1, 3, 5\}$.  When $|{\rm bcd}(G)| = 2$, however, we have the following.

Suppose $N$ is a normal subgroup of $G$ and consider a character
$\chi \in {\rm Irr} (G)$.  Then $\chi$ is called a {\it relative
$M$-character} with respect to $N$ if there exists a subgroup $H$
with $N\subseteq H\subseteq G$ and a character $\theta\in {\rm
Irr}(H)$ such that $\theta^{G} = \chi$ and $\theta_{N} \in {\rm Irr}
(N)$.  When every character $\chi \in {\rm Irr}(G)$ is a relative
$M$-character with respect to $N$, we say that $G$ is a {\it
relative $M$-group} with respect to $N$.

Observe that $G$ is a relative $M$-group with respect to $1$ if and
only if it is an $M$-group.  Also, if $G$ is a relative $M$-group
with respect to $N$, then $G/N$ is an $M$-group. However, the
converse is not true.  For example, if $G={\rm SL}(2, 3)$ and
$N={\bf Z}(G)$, then $G/N\cong A_{4}$, which is an $M$-group, but
$G$ is not a relative $M$-group with respect to $N$ since $G$ has an
irreducible character of degree $2$ whose restriction to $N$ is
reducible.

Chen \cite{ChenKW} proved that $G$ is a relative $M$-group with
respect to every normal subgroup of $G$ if $G$ is either metabelian
(i.e. $G'$ is abelian) or an $M$-group $G$ with ${\rm cd}(G)=\{1, u,
v\}$, where ${\rm cd}(G)=\{\chi(1)\mid \chi\in {\rm Irr}(G)\}$ and
$u, v$ are different primes.

\begin{prop}
Let $G$ be a group and $p$ be a prime.
%and ${\rm IBr}(G)$ be the set of irreducible ($p-$)Brauer characters of $G$.
If ${\rm bcd}(G) = \{ 1, m \}$, where $m>1$ and $p$ does not divide $m$, then $G$ is an $M_{p}$-group. In particular, $G$ is solvable.
\end{prop}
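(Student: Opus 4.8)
The plan is to produce a normal Sylow $p$-subgroup $P$ of $G$, then use that $G/P$ is a $p'$-group with only two irreducible character degrees — where classical structure theory applies — and finally transfer the conclusion back to $G$.

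Since $1$ and $m$ are both prime to $p$, no $\varphi\in\IBr(G)$ has degree divisible by $p$, and the heart of the matter is to deduce from this that $G$ has a normal Sylow $p$-subgroup. The clean way to phrase the appeal is as the Brauer-character analogue of the It\^o--Michler theorem: a finite group has a normal Sylow $p$-subgroup if and only if none of its irreducible $p$-Brauer characters has degree divisible by $p$. If one prefers an argument in place, one passes to $\overline G:=G/O_p(G)$ (which has the same irreducible Brauer characters and the same degrees, so ${\rm bcd}(\overline G)=\{1,m\}$ and $O_p(\overline G)=1$), quotes the classification-based fact that a subnormal quasisimple subgroup all of whose irreducible Brauer characters have $p'$-degree is a $p'$-group, and deduces that the generalized Fitting subgroup of $\overline G$ lies inside $K:=O_{p'}(\overline G)$, whence $C_{\overline G}(K)\le K$; a nontrivial Sylow $p$-subgroup $P$ of $\overline G$ then acts faithfully (as $P\cap C_{\overline G}(K)\le P\cap K=1$) and nontrivially on the $p'$-group $K$, so by a standard coprime-action lemma it fails to fix $\Irr(K)$ pointwise, which by orbit counting forces $p\mid|\overline G:I_{\overline G}(\theta)|$ for some $\theta\in\Irr(K)$; Clifford theory for Brauer characters then produces $\varphi\in\IBr(\overline G)$ lying over $\theta$ with $p\mid|\overline G:I_{\overline G}(\theta)|\mid\varphi(1)$, contradicting $p\nmid m$. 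Either way, $G$ has a normal Sylow $p$-subgroup $P$, and necessarily $P=O_p(G)$.

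Now $P$, being a normal $p$-subgroup, acts trivially on every irreducible module in characteristic $p$, so inflation identifies $\IBr(G/P)$ with $\IBr(G)$ and preserves degrees; as $G/P$ is a $p'$-group, $\IBr(G/P)=\Irr(G/P)$, and hence $\cd(G/P)={\rm bcd}(G)=\{1,m\}$. By the structure theory of groups with two irreducible character degrees (see \cite[Chapter 12]{Isaacs1976}), $G/P$ is solvable and an $M$-group — for instance, in the principal case where $G/P$ has an abelian normal subgroup $A$ of index $m$, the quotient $(G/P)/A$ has order $m$ with all its character degrees in $\{1,m\}$, so it cannot have a nonlinear character and is abelian, whence $G/P$ is metabelian and therefore an $M$-group; the other case is handled similarly. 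Being a $p'$-group, $G/P$ is then an $M_p$-group, and since inflation along $O_p(G)$ sends monomial Brauer characters to monomial Brauer characters, $G$ itself is an $M_p$-group; it is solvable by Okuyama's theorem \cite{Okuyama}, or simply because $G/P$ is solvable and $P$ is a $p$-group.

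The single step carrying real content is the passage from ``$G$ has no Brauer character degree divisible by $p$'' to ``$G$ has a normal Sylow $p$-subgroup''; this is precisely where the hypothesis $p\nmid m$ is used, and where one must invoke either the Brauer-character It\^o--Michler theorem or a self-contained argument that still relies on the classification of finite simple groups. Everything on either side of that step is formal reduction or the long-established analysis of groups with two ordinary character degrees.
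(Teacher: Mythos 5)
Your proposal is correct and follows essentially the same route as the paper: apply the Brauer-character It\^o--Michler theorem to obtain a normal Sylow $p$-subgroup $P$, identify ${\rm cd}(G/P)$ with ${\rm bcd}(G)=\{1,m\}$ since $G/P$ is a $p'$-group, invoke the structure theory of groups with two character degrees (Isaacs, Chapter 12) to get that $G/P$ is metabelian and hence an $M$-group, and then pull the $M_p$-property back to $G$ through the normal $p$-subgroup. The extra detail you supply (the CFSG-based sketch of Brauer It\^o--Michler and the direct verification that metabelian groups are $M$-groups) is sound but does not change the argument.
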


\begin{proof}
Since $p$ does not divide the degree of any irreducible Brauer character of $G$, it follows by It\^o-Michler theorem that $G$ has a normal Sylow $p$-subgroup $P$ and
$${\rm cd} (G/P) = {\rm bcd} (G/P) = {\rm bcd} (G) = \{1, m\}.$$
Therefore, $G/P$ is metabelian by \cite[Corollary 12.6]{Isaacs1976}, and $G/P$ is an $M$-group by %Lemma \ref{lemma3}
\cite[Theorem 1]{ChenKW}.  It follows that $G/P$ is an $M_{p}$-group.
Since $P$ is a normal Sylow $p$-subgroup of $G$, we conclude that $G$ is an $M_{p}$-group.
\end{proof}

If $p$ divides $m$, then we do not necessarily have that $G$ is an $M_{p}$-group.
For example, if $G$ is the symmetric $S_{5}$ on five letters and $p=2$,
then ${\rm bcd}(S_{5})=\{1, 4\}$, however, $S_{5}$ is not an $M_{2}$-group.

%%%%%%%%%%%%%%%%%%%%%%%%%%%%%%%%%%%%%%%%%%%%%%%%%%%%%%%%%%%%%%%%%%%%%%%%%%%%%%%%%%%%%%%%%%%%%%%%%%%%%%

\section*{Acknowledgments}

The first author thanks support of China Scholarship Council and the
International Training Program of Henan Province,
the program of Henan University of Technology (2024PYJH019),
the projects of Education Department of Henan Province (23A110010, YJS2022JC16),
and the programs of Henan Province (HNGD2024020, 242300421384).

%\section*{Declarations}

%\noindent {\bf Conflict of interest} The authors declare no competing interests.
%%%%%%%%%%%%%%%%%%%%%%%%%%%%%%%%%%%%%%%%%%%%%%%%%%%%%%%%%%%%%%%%%%%%%%%%%%%%%%%%%%%%%%%%%%%%%%%%%%%%%%


\begin{thebibliography}{100}
\bibitem{ABGGH}
F. Alizadeh, H. Behravesh, M. Ghaffarzadeh, M Ghasemi, and S. Hekmatara,
Groups with few codegrees of irreducible characters,
{\it Comm. Algebra}  {\bf 47} (2019), 1147-1152.


\bibitem{ChenKW}
Kwang-Wu Chen, On relative M-groups, {\it Chinese J. Math.} {\bf 21}
(1993), 1-12.


\bibitem{ChillagMann}
D. Chillag, A. Mann and O. Manz, The co-degrees of irreducible characters,
{\it Israel J. Math.} {\bf 73} (1991), 207-233.


\bibitem{DuLewis}
N. Du and M. L. Lewis, Codegrees and nilpotence class of $p$-group,
{\it J. Group Theory} {\bf 19} (2016), 561-568.


\bibitem{Isaacs1976}
I. M. Isaacs, {\it Character Theory of Finite Groups},
Academic Press, New York, 1976.


%\bibitem{Moreto2021}
%A. Moret\'{o}, Huppert's conjecture for character codegrees,
%{\it Math. Nachr.} {\bf 294} (2021), 2232-2236.


\bibitem{Moreto2022}
A. Moret\'{o}, Character degrees, character codegrees and nilpotence class of
$p$-groups, {\it Comm. Algebra} {\bf 50} (2022), 803-808.


%\bibitem{Moreto2023}
%A. Moret\'{o}, Multiplicities of character codegrees of finite groups,
%{\it Bull. London Math. Soc.} {\bf 55} (2023), 234-241.


\bibitem{Navarro1998}
G. Navarro, {\it Characters and Blocks of Finite Groups},
Cambridge University Press, Cambridge, 1998.


\bibitem{Okuyama}
T. Okuyama, Module correspondence in finite groups,
{\it Hokkaido Math. J.}, {\bf 10} (1981), 299-318.


\bibitem{Qian1}
G. Qian, Y. Wang and H. Wei,
Co-degrees of irreducible characters in finite groups,
{\it J. Algebra} {\bf 312} (2007), 946-955.


\bibitem{YangQian}
Y. Yang and G. Qian,
The analog of Huppert's conjecture on character codegrees,
{\it J. Algebra} {\bf 478} (2017), 215-219.


\bibitem{Tong}
H. P. Tong-Viet,
Finite groups whose irreducible Brauer characters have prime power degrees,
{\it Israel J. Math.} {\bf 202} (2014), 295-319.


\bibitem{WCZ}
H. Q. Wang, X. Y. Chen and J. W. Zeng, Zeros of Brauer characters,
{\it Acta Math. Sci. Ser. B.} {\bf 32} (2012), 1435-1440.

\end{thebibliography}
\end{document}